\def\R{\mathbb{R}}
\def\Sp{\mathbb{S}}
\def\i{\indent}
\def\L{\Lambda}
\def\G{\Gamma}
\def\om{\omega}
\def\s{\sigma}
\def\m{\mu}
\def\n{\nu}
\def\l{\lambda}
\def\half{\frac{1}{2}}
\def\half{\frac{1}{2}}
\newcommand\EatDot[1]{}
\newtheorem{theorem}{Theorem}[section] 
\newtheorem{corollary}[theorem]{Corollary}
\title[Representations of Ultraspherical Polynomials]
 {Integral Representations of Ultraspherical Polynomials II} 
\author{N. H. Bingham and Tasmin L. Symons}
\begin{document}
\maketitle

\begin{abstract}
In the first part, by the first author's work of 1972, an integral representation for an ultraspherical polynomial of higher index in terms of one of lower index and an infinite series was obtained.  While this representation works well from a theoretical point of view, it is not numerically satisfactory as it involves polynomials of high degree, which are numerically unstable.  Here we sum this series to obtain an integral, which is numerically tractable.
\end{abstract}


\section{Introduction} 
\label{intro}

As in \cite{bin_rep}, we write $W_n^{\l}(x)$ for the ultraspherical (or Gegenbauer) polynomial of degree $n$ and index $\l > 0$ (see e.g. \cite[p. 302]{andrewsaskroy}, \cite[\S 4.4]{sze}), normalised so that $W_n^{\l}(1) = 1$.  This choice of normalisation, due to Bochner \cite{Boc_zonal}, is convenient probabilistically; the first part \cite{bin_rep} was probabilistically motivated \cite{bin_rws}; so too is this sequel (see \S 5).  The $W_n^{\nu}$ are orthogonal polynomials on the interval $[-1,1]$ with respect to the probability measure $G_{\nu}$, where
\begin{equation}
G_\nu(dy) = \frac{\Gamma(\nu + 1)}{\sqrt{\pi}\Gamma(\nu + 1/2)} (1-y^2)^{\nu-1/2} dy:  
\end{equation}
\[
\int_{-1}^1 W_m^{\nu} (x) W_n^{\nu}(x) G_{\nu}(dx) = {\delta}_{mn} {\om}^{\nu}_n,
\]
where
\begin{equation}
\om^\nu_m = \frac{n+\nu}{\nu} \frac{\Gamma(n + 2 \nu)}{n! \Gamma(2\nu)}.
\end{equation}   

\begin{theorem}[\cite{bin_rep}]\label{thrm:askfitch}
If $0 < \nu < \l$, $x \in [-1,1]$, there exists a probability measure $M_\nu^\l(x; dy)$ on $[-1,1]$ such that
\begin{equation}
W^\l_n (x) = \int_{-1}^1 W^\nu_n(y) \, M^\l_\nu(x; dy).
\end{equation}
Moreover, when $\l \neq \nu$ the measure $M^\l_\nu$ is absolutely continuous with density
\begin{equation}\label{bingham_density}
M^\l_\nu (x; dy) = G_\nu(dy) \sum_{m=0}^\infty \omega^\nu_m  W^\l_m(x) W^\nu_m(y).
\end{equation}
\end{theorem}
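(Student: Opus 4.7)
My plan is to take the formula in (1.4) as the definition of the measure and then verify both assertions of the theorem. Set
\[
\rho^\lambda_\nu(x,y) := \sum_{m=0}^\infty \omega_m^\nu W_m^\lambda(x) W_m^\nu(y), \qquad M_\nu^\lambda(x;dy) := \rho^\lambda_\nu(x,y)\, G_\nu(dy).
\]
The integral identity (1.3) then follows by a formal calculation: multiply by $W_n^\nu(y)$, integrate term by term, and apply the orthogonality of $\{W_m^\nu\}$ against $G_\nu$; only the $m=n$ summand survives, leaving $W_n^\lambda(x)$. To justify the interchange of summation and integration, I would use the classical bound $|W_m^\mu(x)| \le W_m^\mu(1) = 1$ for $\mu > 0$ and $x \in [-1,1]$, combined with the polynomial-in-$m$ growth of $\omega_m^\nu$ visible from (1.2); this gives absolute convergence uniformly on compacta in $(-1,1)^2$, with a routine limiting argument handling the endpoints. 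Equivalently, (1.3) can be read off Parseval's formula in $L^2(G_\nu)$, with (1.4) playing the role of a reproducing-type kernel for the operator $W_n^\nu \mapsto W_n^\lambda$.

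It remains to show that $M_\nu^\lambda(x;\cdot)$ is a \emph{probability} measure. Total mass one is immediate from (1.3) specialised to $n=0$, since $W_0^\lambda \equiv W_0^\nu \equiv 1$. The substantive content -- and the main obstacle -- is non-negativity of the density $\rho^\lambda_\nu$ on $[-1,1]^2$ when $\lambda > \nu > 0$. My approach here would be to invoke the classical positivity of the ultraspherical connection coefficients (Askey and Gasper): one has $W_n^\lambda(x) = \sum_{k=0}^n c_{n,k}^{\lambda,\nu} W_k^\nu(x)$ with $c_{n,k}^{\lambda,\nu} \ge 0$ for $\lambda > \nu > 0$. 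Substituting this into (1.4) and reorganising the double sum, together with the ultraspherical product formula $W_k^\nu(x) W_k^\nu(y) = \int W_k^\nu(z)\, d\mu^\nu_{x,y}(z)$ with $\mu^\nu_{x,y} \ge 0$, should present $\rho^\lambda_\nu$ as a non-negative combination of non-negative kernels, yielding the required positivity. A more conceptual alternative, in keeping with the probabilistic perspective signalled in \S 5, is to identify $W_m^\nu$ with a zonal spherical harmonic on $S^{2\nu+1}$ (first for suitable half-integer $\nu$, then by analytic continuation in $\nu$), whereupon $M_\nu^\lambda(x;\cdot)$ emerges as the transition kernel of a natural Markov coupling between the two spheres and positivity is automatic.
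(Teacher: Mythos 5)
Your strategy of taking (\ref{bingham_density}) as the \emph{definition} and then verifying the moment identity by termwise orthogonality founders at the convergence step, and this is not a technicality but the heart of the matter. The bound $|W_m^\mu(x)|\le 1$ together with $\omega_m^\nu\sim Cm^{2\nu}$ bounds the $m$-th term of the series only by $Cm^{2\nu}$, which does not even tend to zero, so it yields no convergence whatsoever; and the sharper interior estimate $W_m^\mu(\cos\theta)=O(m^{-\mu})$ makes the terms $O(m^{\nu-\lambda})$, so absolute convergence holds only when $\lambda>\nu+1$. Indeed the paper states explicitly that the sum-function is infinite at $x=y$ when $\nu<\lambda\le\nu+1$, so your claimed ``absolute convergence uniformly on compacta in $(-1,1)^2$'' is false precisely in the delicate range. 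The argument actually used (Askey--Fitch, then \cite{bin_rep}) works instead with the Abel means (\ref{eqn:AF-r}) for $r\in(-1,1)$, where everything converges geometrically; positivity and the moment identities are established there, and one then lets $r\uparrow 1$ via an Abel-limit argument (Lemma 1 of \cite{bin_rep}) to identify the density as an $L_1(G_\nu)$ limit rather than as a pointwise absolutely convergent series. Any repair of your proof has to introduce the parameter $r$ (or some equivalent regularisation) at the outset.

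Your positivity argument has a second gap. Substituting the connection formula $W_m^\lambda(x)=\sum_{k\le m}c_{m,k}W_k^\nu(x)$ into (\ref{bingham_density}) produces cross terms $W_k^\nu(x)W_m^\nu(y)$ with $k\ne m$, and Gegenbauer's multiplication theorem (\ref{eqn:G}) applies only to products of \emph{equal} degree, so the rearranged double sum is not a non-negative combination of manifestly non-negative kernels. (Positivity of connection coefficients says $W_n^\lambda$ is a convex combination of $W_k^\nu$ over $k\le n$; the theorem needs the degree-preserving map $W_n^\nu\mapsto W_n^\lambda$ to be implemented by a positive kernel, which is a different assertion.) The positivity that actually does the work is that of the Poisson-type kernel (\ref{eqn:GF}), $(1-r^2)/(1-2rz+r^2)^{\nu+1}\ge 0$, reached after the Feldheim--Vilenkin integral (\ref{eqn:FV}) bridges the index gap and (\ref{eqn:G}) collapses the product of two $W_n^\nu$ factors into one; this is exactly the computation carried out in the proof of Theorem \ref{thrm:binint}. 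Your spherical-projection picture is the right intuition for half-integer indices, but ``analytic continuation in $\nu$'' of a positivity statement needs care: one continues an identity and must then argue non-negativity of the resulting explicit kernel for non-integer parameters, which is what the fractional-integral representation supplies.
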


The series in \ref{bingham_density} is transparent from a theoretical point of view (it is derived in \cite{bin_rep} from the earlier work of Askey and Fitch \cite{askf} by an Abel-limit operation), but unsuitable for numerical use as it involves polynomials of high degree, which oscillate wildly. Our purpose here is to circumvent this by giving an explicit formula for the sum of the infinite series as a {\it double integral}, which is numerically tractable.  Our result, Theorem \ref{thrm:binint} below, is interesting in its own right, completing the integral representations in \cite{bin_rep} by showing the dependence on the higher index, $\lambda$, in a more convenient and structurally revealing way. 

\section{Preliminaries}
\label{prelim}

The Poisson kernel for the Jacobi polynomials reduces in the ultraspherical case to the generating function 
\begin{equation}\label{eqn:GF}
\sum_{n=0}^{\infty} {\om}_n^{\n} r^n W_n^{\n}(x) = (1 - r^2)/(1 - 2rx + x^2)^{\n + 1},
\quad r \in (-1,1),                                                                 
\end{equation}
cf. \cite[(2.1)]{bin_rep}.  Note that this is not the usual generating function for the ultraspherical polynomials \cite[\S 4.7.23]{sze}. 

Askey and Fitch \cite{askf} showed that for $x,y \in [-1,1]$, $r \in (-1,1)$, $0 \leq \nu < \lambda \leq \infty$, the series
\begin{equation}\label{eqn:AF-r}
\sum_{n=0}^{\infty} {\om}_n^{\nu} r^n W_n^{\lambda}(x)W_n^{\nu}(y)          
\end{equation}
converges to a non-negative sum-function, which leads to a corresponding probability measure $M^{\l}_{\nu}(x)$ satisfying
\begin{equation}\label{eq:ast}
W_n^{\l}(x) 
= \int_{-1}^1 W_n^{\nu}(y) M^{\l}_{\nu}(x; dy), \quad n = 0,1,2, \ldots. 
\end{equation}
Here (see \cite{bin_rep}) we may take $0 \leq \nu \leq \l \leq \infty$, $x \in [-1,1]$.  Some cases give Dirac laws: if $x = \pm 1$, $M^{\l}_{\nu}(\pm 1) = \delta_{\pm 1}$ (as $W_n^{\l}(\pm 1) = (\pm 1)^n$).  If $\l = \nu$, then $M^{\l}_{\l}(x) = {\delta}_x$ (as there is no projection to be done); so we may restrict to $\nu < \l$ as before. Now \cite[Lemma 1]{bin_rep} gives the Abel-limit operation explicitly: for $x, y \in (-1,1)$, we may take $r = 1$ here to get
\begin{equation}\label{eqn:AF-1}
m^{\l}_{\nu}(x; y)
:= \sum_{n=0}^{\infty} {\om}_n^{\nu} W_n^{\lambda}(x)W_n^{\nu}(y) \geq 0,  
\end{equation}
a non-negative function in $L_1(G_{\nu})$, finite-valued unless $x = y$ and $\nu < \lambda \leq \nu + 1$.  It is in fact the {\it Radon-Nikodym derivative} $dM^{\l}_{\nu}(x; dy)/dG_{\nu}(dy)$:
\begin{equation}\label{eq:RN}
M^{\l}_{\nu}(x; dy) = G_{\nu}(dy)\cdot m^{\l}_{\nu}(x; y) 
= G_{\nu}(dx)\cdot \sum_{n=0}^{\infty} {\om}_n^{\nu} W_n^{\lambda}(x)W_n^{\nu}(y).
\end{equation}

Following \cite{bin_rep}, for $\l > \n$ write $H_{\n}^{\l}$ for the probability measure of Beta type on $[0,1]$ given by the {\it Sonine law}
\begin{equation}
H_{\n}^{\l}(dx) := \frac{2 \G(\l + \half)}{\G(\n + \half) \G(\l - \n)} \cdot x^{2\n} (1 - x^2)^{\l - \n - \half} dx.
\end{equation}
This occurs in Sonine's first finite integral for the Bessel function \cite[p. 373]{wat1}: for
\begin{align}
\L_{\m}(t) &:= \G(\n + 1) J_{\n}(t)(t/2)^{-\m}, \\
\L_{\l - \half}(t) &= \int_0^1 \L_{\n - \half}(ut)H_{\n}^{\l}(du)    \label{eqn:S}
\end{align}
(the drop by a half-integer in parameter here reflects the drop in dimension in ${\Sp}^d \subset {\R}^{d+1}$; see \S 4 below). 

For the product of $W_n$ terms in (\ref{eqn:AF-1}), we need Gegenbauer's multiplication theorem for the ultraspherical polynomials \cite[p. 369]{wat1},
\begin{equation}\label{eqn:G}
W_n^{\n}(x) W_n^{\n}(y) 
= \int_{-1}^1 W_n^{\n}(xy + \s \sqrt{1 - x^2} \sqrt{1 - y^2}) G_{\n - \half}(d \s).   
\end{equation}

To cope with the drop in index (dimension) in (\ref{eqn:AF-1}), we need the {\it Feldheim-Vilenkin integral} \cite[(2.11)]{bin_rep}, \cite[p.315]{andrewsaskroy}, \cite{askf},
\begin{align}
W_n^{\l}(x) = &\left[ \frac{2 \G(\l + \half)}{\G(\nu + \half) \G(\l - \nu)} \right] \int_0^1 u^{2\nu} (1-u^2)^{\l - \nu - 1} \nonumber \\
&\cdot [x^2 - x^2 u^2 + u^2]^{\half n} W_n^{\n} \left( \frac{x}{\sqrt{x^2 - x^2 u^2 + u^2}} \right) du. \label{eqn:FV}
\end{align}

\section{The result}
\label{result}

We can now formulate our result.

\begin{theorem}\label{thrm:binint}
For $r \in (-1,1)$, the sum of the Askey-Fitch series (\ref{eqn:AF-1}) above is given by the integral (\ref{eqn:astast}) below: 
\begin{equation}\label{eqn:astast}
\int_0^1 H_{\n}^{\l}(du) \int_{-1}^1 G_{\n - \half}(dv) \,
\frac{\left[1 - r^2 (x^2 - x^2 u^2 + u^2) \right]}{I^{\n + 1}},     
\end{equation}
where $I$ is given by
\begin{equation}
I :=
1 - 2r\cdot \frac{xy + uv \sqrt{1 - x^2} \sqrt{1 - y^2}}{\sqrt{x^2 - x^2 u^2 + u^2}}
+ \frac{(xy + uv \sqrt{1 - x^2}\sqrt{1 - y^2})^2}{(x^2 - x^2 u^2 + u^2)}.
\end{equation}
Moreover, this holds also for $r = 1$ unless $\nu < \l \leq \nu + 1$.
\end{theorem}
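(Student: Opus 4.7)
The strategy is to evaluate the Askey--Fitch series (\ref{eqn:AF-r}) by chaining three tools from Section~\ref{prelim}: the Feldheim--Vilenkin integral (\ref{eqn:FV}) to drop the higher index $\lambda$ in $W_n^\lambda(x)$ down to $\nu$; Gegenbauer's multiplication theorem (\ref{eqn:G}) to collapse the resulting product $W_n^\nu(\cdot)\,W_n^\nu(y)$ of equal-index polynomials into a single $W_n^\nu(\cdot)$; and the Poisson-type generating function (\ref{eqn:GF}) to sum the residual series in closed form.

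Setting $A = A(x,u) := \sqrt{x^2 - x^2 u^2 + u^2}$, the identity $A^2 - x^2 = u^2(1 - x^2) \ge 0$ gives both $A \le 1$ and $|x/A| \le 1$, which will be needed for the two substitutions. Substituting (\ref{eqn:FV}) into (\ref{eqn:AF-r}) and interchanging sum and $du$-integral yields
\begin{equation*}
\sum_{n=0}^\infty \omega_n^\nu r^n W_n^\lambda(x) W_n^\nu(y) = \int_0^1 H_\nu^\lambda(du)\, \sum_{n=0}^\infty \omega_n^\nu (rA)^n W_n^\nu(x/A)\, W_n^\nu(y).
\end{equation*}
Next I would apply Gegenbauer's multiplication theorem to the inner $W_n^\nu(x/A)\,W_n^\nu(y)$ (using $\sqrt{1 - (x/A)^2} = u\sqrt{1 - x^2}/A$) to obtain the argument $Z := (xy + uv\sqrt{1 - x^2}\sqrt{1 - y^2})/A$; a second interchange reduces the inner sum to $\sum_n \omega_n^\nu (rA)^n W_n^\nu(Z)$, which is exactly (\ref{eqn:GF}) evaluated at $(rA, Z)$. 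Summing in closed form produces the integrand $(1 - r^2 A^2)/I^{\nu+1}$ of (\ref{eqn:astast}), proving the formula for $r \in (-1,1)$.

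For the $r = 1$ assertion I would pass to the limit $r \uparrow 1$ by an Abel argument as in \cite{bin_rep}. On the left, the Askey--Fitch positivity from \cite{askf} makes the series monotone in $r \in [0,1)$; on the right, the integrand is likewise non-negative and converges pointwise to its $r = 1$ value. When $\lambda > \nu + 1$ the limit $m_\nu^\lambda$ lies in $L^1(G_\nu)$ and is pointwise finite (as recorded around (\ref{eq:RN})), so monotone convergence delivers the identity at $r = 1$. In the excluded range $\nu < \lambda \le \nu + 1$ the function $m_\nu^\lambda(x;y)$ blows up on the diagonal $x = y$, so no such pointwise integral representation is available, which explains the hypothesis.

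The main obstacle is the bookkeeping around the two sum--integral interchanges and the Abel passage, rather than any algebraic difficulty: the three tools chain cleanly once one confirms that their arguments lie in the permitted ranges. For $|r| < 1$ the interchanges reduce to dominated convergence, since $|W_n^\nu| \le 1$, $A \le 1$, and $\sum_n \omega_n^\nu |r|^n < \infty$ (the $\omega_n^\nu$ growing only polynomially in $n$); at $r = 1$ one must lean instead on the $L^1(G_\nu)$-integrability of $m_\nu^\lambda$ recorded in (\ref{eq:RN}), together with the Askey--Fitch positivity that underlies the probability measure $M_\nu^\lambda$.
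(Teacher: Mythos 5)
Your proposal follows exactly the paper's own route: substitute the Feldheim--Vilenkin integral (\ref{eqn:FV}) into the series, interchange sum and integral, collapse the product $W_n^\nu(x/A)W_n^\nu(y)$ via Gegenbauer's multiplication theorem (\ref{eqn:G}), and sum the resulting single-index series with the generating function (\ref{eqn:GF}); your treatment of the $r=1$ case by Abel-limit/monotone convergence matches the paper's appeal to the Abel-limit operation of \cite{bin_rep}. The proposal is correct and, if anything, supplies more explicit justification of the interchanges than the paper's own proof does.
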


\begin{proof}
We sum the series by reducing it to the generating function (\ref{eqn:GF}).  There are two steps: reduction of $\l$ to $\nu$ by the Feldheim-Vilenkin integral (\ref{eqn:FV}) and reduction of two $W_n$ terms to one by Gegenbauer's multiplication theorem (\ref{eqn:G}). 

\i We follow \cite{bin_rep}.  As there, we may substitute for $W_n^{\l}(x)$ from (\ref{eqn:FV}) into the series (\ref{eqn:AF-1}) and integrate termwise, rewriting (\ref{eqn:AF-1}) as
\begin{equation}
\int_0^1 H_{\n}^{\l}(du) \sum_{n=0}^{\infty} {\om}_n^{\n} (r [x^2 - x^2 u^2 + u^2]^{\half})^n \cdot
            W_n^{\n}(y) W_n^{\n} \left( \frac{x}{\sqrt{x^2 - x^2 u^2 + u^2}} \right).
\end{equation}
We use Gegenbauer's multiplication theorem (\ref{eqn:G}) with
\begin{equation*}
r \mapsto r\sqrt{x^2 - x^2 u^2 + u^2},
\end{equation*}
and replace the product of $W_n^{\n}$ factors in the above, at the cost of another integration over 
$G_{\n - \half}(dv)$, by a single $W_n^{\n}$ term, with argument
\begin{equation}
\frac{xy}{\sqrt{x^2 - x^2 u^2 + y^2}} + v \sqrt{1 - y^2}.\sqrt{1 - \frac{x^2}{x^2 - x^2 u^2 + u^2}}
= \frac{xy + uv \sqrt{1 - x^2} \sqrt{1 - y^2}}{\sqrt{x^2 - x^2 u^2 + u^2}}.
\end{equation} 
The integrand is now of the form $\sum \om^\nu_n r^n W^\nu_n(\cdot)$, and the result now follows from the generating function (\ref{eqn:GF}).                            
\end{proof}

This result completes and complements the work in \cite{askf} and  \cite{bin_rep} by displaying the dependence on the higher index $\l$ in a structurally revealing way: for simplicity, let $r = 1$ so that
\begin{equation}
I = \left( 1 - \frac{xy + uv \sqrt{1 - x^2}\sqrt{1-y^2}}{\sqrt{x^2 - x^2u^2 + u^2}}\right)^2,
\end{equation}
and (\ref{eqn:astast}) is given by
\begin{equation}
\int_0^1 H_\nu^\l(du) \int_{-1}^1 G_{\nu - \half}(dv) \frac{1 - (x^2 - x^2 u^2 + u^2)}{I^{\nu + 1}}.
\end{equation}
Using the definition of $H^\l_\nu$ and the probability measure $G_{\nu + \half}$ and simplifying, (\ref{eqn:astast}) becomes
\begin{align}
\frac{2}{\sqrt{\pi}} \int_0^1 \frac{\G(\l + \half)}{\G(\l - \nu)} u^{2\nu} (1-u^2)^{\l - \nu - \half} \int_{-1}^1 (1-v^2)^{\nu - \half} \left[ \frac{1 - (x^2 - x^2u^2 + u^2)}{I^{2(\nu + 1)}} \right] \, dv du.
\end{align}
Note that the higher index $\l$ occurs only in the outer integral. Moreover, the interactions between the indexes in the outer integral occurs only in the Gamma function $\G(\l - \nu)$ and the power $\l - \nu - 1/2$ of $(1-u^2)$.

\begin{figure}
   \vspace*{8pt}
   \begin{minipage}{15cm}
   \centering
   \includegraphics[width = 0.8\textwidth]{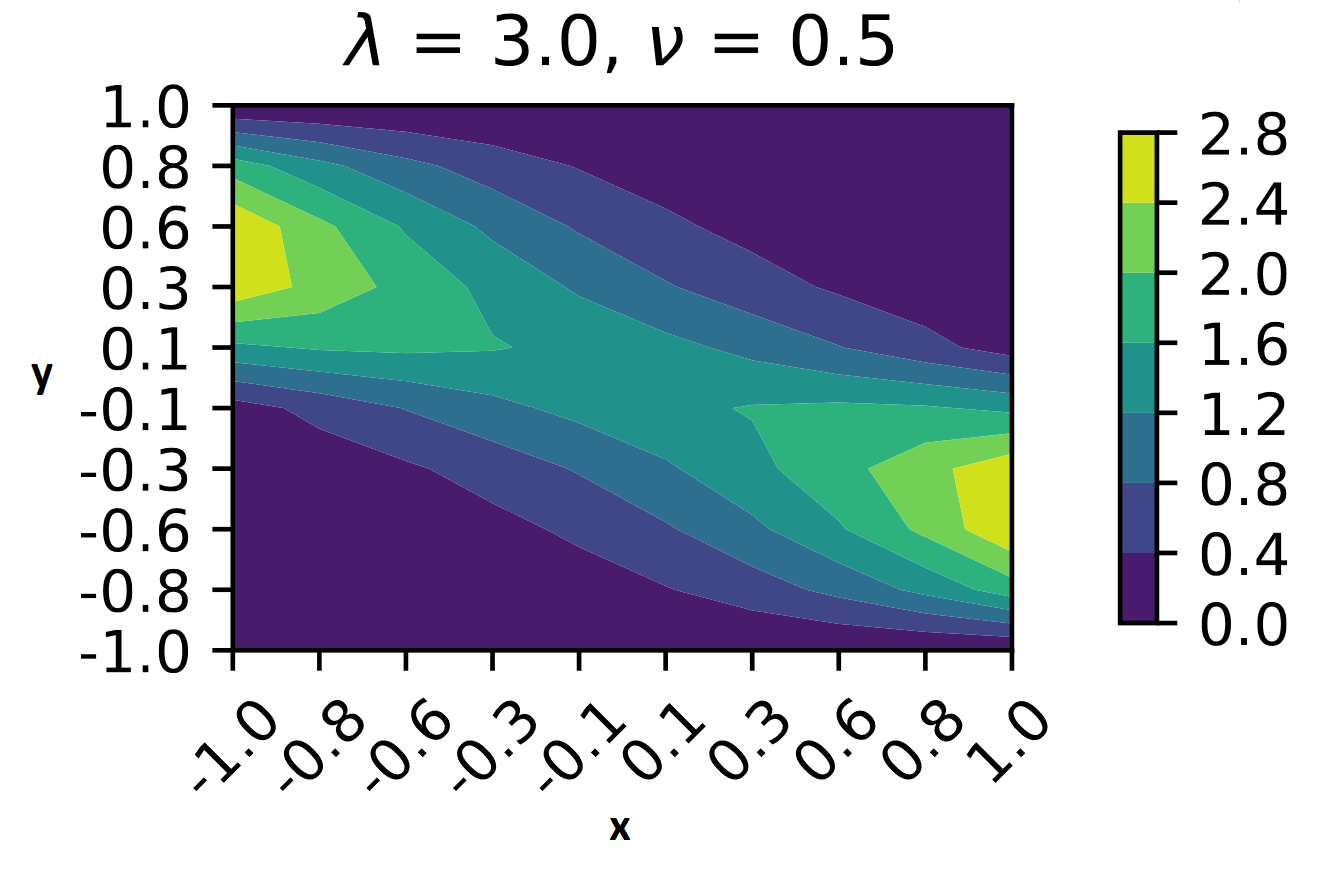}
   \end{minipage}
   \vspace*{8pt}
\caption{Numerical evaluation of series (\ref{eqn:astast}) for $\lambda = 3.0$. $\nu = 0.5$.}
\label{myfigure}
\end{figure}

\section{Dimension walks}
Write ${\cal P}_{\nu}$ for the class of functions $f$ on $[-1,1]$ which are mixtures of $W_n^{\nu}$, i.e., of the form
\begin{equation}\label{eqn:schoenberg}
f(x) = \sum_{n=0}^\infty a_n W_n^{\nu} (x), \quad \sum a_n = 1, a_n \geq 0
\end{equation}
for some probability law $a = \{ a_n \}_0^{\infty}$ (the ultraspherical series converges uniformly as $|W_n^{\nu}(x)| \leq 1$).  The classes ${\cal P}_{\nu}$ are decreasing in $\nu \in [0,\infty]$, and are continuous in $\nu$, in that
\[
\bigcap \{ {\cal P}_{\mu}: 0 \leq \mu < \nu \} = {\cal P}_{\nu}, \qquad
\bigcup \{ {\cal P}_{\mu}: \nu < \mu \leq \infty \} = {\cal P}_{\nu}
\]
(\cite[Th. 1]{bin}).  While the parameters $\l$, $\nu > 0$ are continuous here, the half-integer values are particularly important.  With ${\Sp}^d$ the $d$-sphere -- the unit sphere in Euclidean $(d+1)$-space ${\R}^{d+1}$, a $d$-dimensional Riemannian manifold -- the relevant index for the ultraspherical polynomial is $\nu$, where
\[
\nu = \frac{1}{2}(d - 1).
\]
With $\nu < \l$ as above, the higher dimension corresponding to $\l$ will be written $d'$ (so $\l = \frac{1}{2}(d' - 1)$).  Then, as in \cite{bin_rws}, \cite{bin_rep}, the passage from $\l$ to $\nu < \l$ corresponds to {\it projection} from the $d'$-sphere to the $d$-sphere.  The limiting case $\nu = \infty$ gives $W_n^{\infty} (x) = x^n$, and ${\cal P}_{\infty}$ is the class of probability generating functions, or the class of positive definite functions on the unit sphere in Hilbert space (\cite [Lemma 2]{bin}, \cite{sch}).   

Covariance functions on spheres are very valuable in applications to Planet Earth (see \S 5).  Operations which preserve positive-definiteness are useful in the construction of new families of such covariance functions. Two such operations, coined `walks on dimensions', one changing the dimension by 1, and the other by 2, were proposed for positive-definite functions on spheres by Beatson and zu Castell \cite{beaz2}, \cite{beaz1}. The one-step walks in \cite{beaz2} are based on the Riemann-Liouville operators, but lack the highly desirable {\it semi-group property}, in which passage from $\l$ to $\nu$ and then $\nu$ to $\mu$ is the same as passage from $\l$ to $\mu$ directly.

\begin{theorem}\label{thrm:dimwalks}
For $f \in {\cal P}_{\nu}$ as in (\ref{eqn:schoenberg}),
\begin{equation}
f(x) = \sum_{n=0}^\infty a_n W_n^{\l} (x) \in {\cal P}_{\l}.
\end{equation} 
\end{theorem}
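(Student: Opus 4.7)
The plan is to prove the theorem by direct application of the definition of ${\cal P}_\lambda$. By hypothesis, $f \in {\cal P}_\nu$ has a Schoenberg-type expansion $f(x) = \sum_{n=0}^\infty a_n W_n^\nu(x)$ with $a_n \geq 0$ and $\sum_n a_n = 1$, and the sequence $(a_n)$ is uniquely determined by $f$ through orthogonality of the $W_n^\nu$ with respect to $G_\nu$. The claim, read with the natural reparsing of the displayed equation, is that the companion function
\[
g(x) := \sum_{n=0}^\infty a_n W_n^\lambda(x)
\]
belongs to ${\cal P}_\lambda$.

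I would proceed in two short steps. First, verify convergence: under Bochner's normalisation $W_n^\lambda(1) = 1$ one has $|W_n^\lambda(x)| \leq 1$ for all $x \in [-1,1]$, so the series defining $g$ is dominated termwise by the convergent sum $\sum a_n = 1$ and hence converges absolutely and uniformly on $[-1,1]$. Second, since $g$ is by construction a convex combination of the $W_n^\lambda$ with probability weights $(a_n)$, the defining form (\ref{eqn:schoenberg}) is satisfied at the higher index $\lambda$, and hence $g \in {\cal P}_\lambda$.

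The step one might naively expect to be the obstacle --- establishing non-negativity of the coefficients in the new representation --- collapses entirely, because those coefficients are simply inherited from the hypothesised expansion of $f$. This is precisely the point of the theorem: the assignment $D_\nu^\lambda : \sum a_n W_n^\nu \mapsto \sum a_n W_n^\lambda$ defines a well-behaved dimension-walk operator from ${\cal P}_\nu$ into ${\cal P}_\lambda$. In contrast to the Riemann--Liouville walks of Beatson and zu Castell \cite{beaz2}, \cite{beaz1}, this operator visibly enjoys the semigroup property $D_\mu^\lambda \circ D_\nu^\mu = D_\nu^\lambda$ for $\nu < \mu < \lambda$, since composition simply preserves the coefficient sequence $(a_n)$. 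The structural transparency of the proof, rather than its difficulty, is the main payoff.
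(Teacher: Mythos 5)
Your proof is correct, but it is not the route the paper takes. You argue entirely at the level of coefficients: since $|W_n^{\lambda}(x)|\le 1$ under Bochner's normalisation, the series $g(x)=\sum_n a_n W_n^{\lambda}(x)$ converges uniformly, and since the weights $a_n\ge 0$, $\sum a_n=1$ are inherited unchanged from the expansion of $f$, the function $g$ satisfies the defining form (\ref{eqn:schoenberg}) at index $\lambda$ by construction --- so membership in ${\cal P}_{\lambda}$ is essentially definitional (modulo the well-definedness point you correctly flag, that the $a_n$ are determined by $f$ via orthogonality). The paper instead proves the theorem by exhibiting $g$ as the image of $f$ under the integral operator $f\mapsto \int_{-1}^1 f(y)\,M_{\nu}^{\lambda}(x;dy)$: expanding $f$, interchanging sum and integral (justified by the same uniform convergence you invoke), and applying the representation $W_n^{\lambda}(x)=\int_{-1}^1 W_n^{\nu}(y)\,M_{\nu}^{\lambda}(x;dy)$ of Theorem \ref{thrm:askfitch}. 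The two arguments establish the same statement, but they buy different things. Your version is more elementary and makes transparent why positivity of the new coefficients is a non-issue, and it gives the cleanest possible proof of the semigroup property of the corollary. The paper's version identifies the dimension walk concretely as integration against the kernel $M_{\nu}^{\lambda}$ --- the object whose density is the whole point of Theorem \ref{thrm:binint} --- so that one can pass from $f$ to its image in ${\cal P}_{\lambda}$ directly as a function, without first extracting the Schoenberg coefficients; that realisation is what makes the walk usable in practice and is the substantive content behind the otherwise near-tautological statement. Neither argument has a gap, but if you want your write-up to carry the same weight as the paper's, you should at least remark that the coefficient-level map coincides with the kernel operator $\int_{-1}^1 (\cdot)\,M_{\nu}^{\lambda}(x;dy)$.
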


\begin{proof}
\begin{align}
\int_{-1}^1 f(y) M^{\l}_{\nu}(x; dy) &= \int_{-1}^1  \sum_{n=0}^\infty a_n W_n^{\nu} (y)  M^{\l}_{\nu}(x; dy) \\
&= \sum_{n=0}^\infty  a_n \int_{-1}^1  W_n^{\nu} (y)  M^{\l}_{\mu}(x; dy) \label{sumint} \\
&=  \sum_{n=0}^\infty  a_n W^{\l}_n(x) \in \mathcal{P}(\Sp^{d'}),
\end{align}
interchange of the sum and integral in \ref{sumint} being justified by the uniform convergence of the Schoenberg expansion.
\end{proof}

\begin{corollary}  The operation of passing from $f(x) \in {\cal P}_{\nu}$ to $\int_{-1}^1 f(y) M_{\nu}^{\l}(x, dy) \in {\cal P}_{\l}$ in the theorem has the semigroup property.
\end{corollary}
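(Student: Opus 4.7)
The plan is to introduce a shorthand for the operation of Theorem \ref{thrm:dimwalks}: for $\mu < \nu$, let $T_\mu^\nu : \mathcal{P}_\mu \to \mathcal{P}_\nu$ denote the map
\[
(T_\mu^\nu f)(x) \;=\; \int_{-1}^1 f(y)\, M_\mu^\nu(x;dy).
\]
With $\mu < \nu < \lambda$, the semigroup property to be established then reads $T_\nu^\lambda \circ T_\mu^\nu = T_\mu^\lambda$ on $\mathcal{P}_\mu$.

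The key observation is that $T$ acts transparently on the Schoenberg basis. For an arbitrary $f \in \mathcal{P}_\mu$ with Schoenberg expansion $f = \sum_{n=0}^\infty a_n W_n^\mu$, Theorem \ref{thrm:dimwalks} applied with index pair $(\mu,\nu)$ gives $T_\mu^\nu f = \sum a_n W_n^\nu$, which lies in $\mathcal{P}_\nu$. A second application, now with index pair $(\nu,\lambda)$, produces $T_\nu^\lambda(T_\mu^\nu f) = \sum a_n W_n^\lambda$. On the other hand, Theorem \ref{thrm:dimwalks} applied directly with pair $(\mu,\lambda)$ yields $T_\mu^\lambda f = \sum a_n W_n^\lambda$. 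Since the $W_n^\lambda$ are orthogonal with respect to the probability measure $G_\lambda$, the Schoenberg coefficients determine a function in $\mathcal{P}_\lambda$ uniquely, so both sides of the semigroup identity coincide as functions.

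The only analytic point demanding caution is the legitimacy of the term-by-term interchange of sum and integral carried out at each application of a $T$-operator. This is precisely the step addressed in the proof of Theorem \ref{thrm:dimwalks}, where the uniform convergence of the Schoenberg series on $[-1,1]$ (a consequence of $|W_n^\nu(x)| \le 1$ and $\sum a_n = 1$) underwrites the exchange; the same bound applies verbatim to the second application, so no new estimate is required. In summary I anticipate no substantial obstacle: the semigroup property reduces to the remark that each $T_\mu^\nu$ fixes the Schoenberg coefficients $\{a_n\}$ while relabelling the basis $\{W_n^\mu\}$ as $\{W_n^\nu\}$, so two successive relabellings compose to the single relabelling from $\{W_n^\mu\}$ directly to $\{W_n^\lambda\}$.
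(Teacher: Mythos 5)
Your proposal is correct and is essentially the paper's own argument: the paper's proof consists precisely of the observation that the operation leaves the mixture coefficients $a_n$ unchanged while re-indexing the basis, so successive applications compose to the direct one. Your extra remarks (uniqueness of the Schoenberg coefficients via orthogonality, and the interchange of sum and integral already justified in Theorem \ref{thrm:dimwalks}) only make explicit what the paper leaves implicit.
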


\begin{proof}  The mixture coefficients $a_n$ are unchanged by this operation, and so remain unchanged under further operations of the same type.
\end{proof}

\section{Complements}
\subsection{Hypergroups and symmetric spaces.}  
Hypergroups are `locally compact spaces with a group-like structure on which the bounded measures convolve in a similar way to that on a locally compact group', to quote from the standard work on this important subject, \cite[p.1]{BloH1}.  The probabilistic setting of random walks on spheres \cite[p.196-197]{bin_rws} that inspired both \cite{bin_rep} and this paper, its sequel, is in hypergroup language that of the Bingham (or Bingham-Gegenbauer) hypergroup.  This in turn was inspired by Kingman's work on random walks with spherical symmetry \cite{kin}, which gives the Kingman (or Kingman-Bessel) hypergroup.  The theory for spheres and for spherical symmetry give the prototypical examples of symmetric spaces of rank one of compact type (constant positive curvature) and of Euclidean type (zero curvature); these are complemented by the case of constant negative curvature, the hyperbolic or Zeuner hypergroups \cite{zeuner}.  For background on symmetric spaces we refer to Helgason \cite{Hel2}, for spaces of constant curvature to Wolf \cite{Wol1}, and for compact symmetric spaces to Askey and Bingham \cite{askb}.

We note that the Kingman situation (Euclidean space with spherical symmetry) may be recovered from the spherical one here by letting the radius of the sphere tend to infinity.  The Bessel functions in the Kingman theory arise from radialisation of the Fourier transform in Euclidean space under spherical symmetry \cite[II.7]{Bocc}.

\subsection{Gaussian processes, path properties, Tauberian theorems.}
The positive definite functions in the classes ${\cal P_{\nu}}$ of \S 4 serve as covariances of Gaussian processes parametrised by spheres.  Their distributions are determined by  the sequence $a = \{ a_n \}$ (the {\it angular power spectrum}) of the Schoenberg expansion coefficients above.  In particular, the rate of decay of the $a_n$ governs the path properties: the faster the decay, the smoother the paths.  For details, see  \cite{sheppmem}.  Crucial here is Malyarenko's theorem \cite[Ch. 4]{mal_book}.  This rests on a Tauberian theorem of the first author \cite{bin_jacobi}, which in turn derives from work of Askey and Wainger \cite{askw}.  Here it is necessary to move from the one-parameter family of ultraspherical polynomials $W_n^{\nu}$ to the two-parameter family of Jacobi polynomials $J_n^{\alpha, \beta}$ containing it (\cite[Ch. 6]{andrewsaskroy}, \cite[Ch. IV]{sze}). 

\subsection{Sphere cross line.}
The motivation for much of the interest in positive definite functions on spheres derives from its applications in geostatistics.  Here one has both spatial dependence and temporal evolution, and so one is dealing with geotemporal processes.  For background here, see e.g. \cite{bms}, \cite{bins_walks}.

\section*{Postscript}

To close, the first author takes pleasure in noting the half-century between  Part I \cite{bin_rep} (which derives from his own PhD of 1969) and the present Part II (which derives from the second author's PhD of 2020).  We both take pleasure in dedicating the paper to the memory of Dick Askey, whose influence pervades it.  Dick was a famous expert on special functions, but was interested in their applications, including those to probability.  When \cite{askb} was written, he used to dine out by saying, with tongue in cheek, ``I've just written a paper with Bingham on Gaussian processes -- whatever they are."



\bibliographystyle{plain}
\bibliography{bibliography}

\affiliationone{
   N. H. Bingham\\
   Department of Mathematics \\
   Imperial College London \\
   South Kensington Campus \\
   London, SW7 1AZ \\
   UK 
   \email{n.bingham@imperial.ac.uk}}
\affiliationtwo{
   Tasmin L. Symons \\
   Telethon Kids Institute \\
   15 Hospital Avenue \\
   Perth, WA 6009 \\
   Australia 
   \email{tasmin.symons@telethonkids.org.au}}
%
\end{document}